\def\BibTeX{{\rm B\kern-.05em{\sc i\kern-.025em b}\kern-.08em
    T\kern-.1667em\lower.7ex\hbox{E}\kern-.125emX}}
\begin{document}

\title{Sparse Multiple Kernel Learning: \\ Support  Identification via Mirror Stratifiability
\thanks{
The work of G. Garrigos has been supported by the European Research Council (ERC project NORIA).
L. Rosasco is funded by the Air Force project FA9550-17-1-0390 (European Office of Aerospace Research and Development) and by the FIRB project RBFR12M3AC (Italian Ministry of Education, University and Research).
Both L. Rosasco and S. Villa are funded by the RISE project NoMADS - DLV-777826.
}
}

\author{\IEEEauthorblockN{Guillaume Garrigos}
\IEEEauthorblockA{\textit{\'Ecole Normale Sup\'erieure, CNRS} \\
Paris, France \\
guillaume.garrigos@ens.fr}
\and
\IEEEauthorblockN{Lorenzo Rosasco}
\IEEEauthorblockA{
\textit{Universit\`a degli Studi di Genova,}\\
\textit{Massachusetts Institute of Technology,}\\
\textit{Istituto Italiano di Tecnologia,}\\
Genova, Italy \\
lrosasco@mit.edu}
\and
\IEEEauthorblockN{Silvia Villa}
\IEEEauthorblockA{\textit{Dipartamento di Matematica}\\
\textit{Politecnico di Milano} \\
Milano, Italy \\
silvia.villa@polimi.it}
}

\maketitle

\begin{abstract}
In statistical machine learning, kernel methods allow to  consider infinite dimensional feature spaces 
with a computational cost that only depends on the number of observations. This is usually done by
solving an optimization problem depending on a data fit term and a suitable regularizer. In this paper
we consider feature maps which are the concatenation of a fixed, possibly large, set of simpler feature maps. The
penalty is a sparsity inducing one, promoting solutions depending only on a small subset of the features.
The group lasso problem is a special case of this more general setting.
We show that one of the most popular optimization algorithms to solve the regularized objective
function,  the forward-backward splitting method, allows to perform feature selection in a stable manner. 
In particular, we prove that the set of relevant features is identified by the algorithm after a finite number of iterations if a suitable qualification condition holds. 
Our analysis rely on the notions of stratification and mirror stratifiability.
\end{abstract}

\begin{IEEEkeywords}
Multiple Kernel Learning, Feature Selection, Group Sparsity, Support recovery.
\end{IEEEkeywords}

\section{Introduction}

Kernel methods provide a practical and elegant way to derive nonparametric approaches to supervised learning \cite{SteiChri08}. 
In this context, the goal is to determine a function $f_*\colon \mathbb{R}^d\to \mathbb{R}$ observing a finite training set of
data points $\{(x_1,y_1),\ldots,(x_m,y_m)\}\subset \mathbb{R}^d\times \mathbb{R}$.  One of the most popular approaches to find estimates of $f_*$ relies on the solution of an empirical  minimization problem 
\begin{equation}
\label{e:RERM}
\min_{f\in\HH} \ \ \lambda R(f)+\frac{1}{2m}\sum\limits_{i=1}^{m} (f(x_i) - y_i )^2, \quad\lambda>0,
\end{equation}
where
$(\mathcal{F},\langle\cdot,\cdot\rangle)$ is an appropriate  Hilbert space  of real valued functions  on $\mathbb{R}^d$ and  
$R\colon \HH\to\mathbb{R}$ a  regularization functional.
The choice of the  space $\HH$ and the regularization $R$ is crucial, since it  allows  to incorporate  a priori information on the problem \cite{Vap98}.  In kernel methods, $\HH$ 
is realized through a (nonlinear) feature map $\Phi\colon \mathbb{R}^d\to H$, for  a separable Hilbert space $H$, in the sense that
\[
(\forall f\in \HH)(\forall x\in\mathbb{R}^d)\quad f(x)=\langle w, \Phi(x)\rangle_{H}, 
\]
for some $w\in H$. A positive definite kernel can then be defined by setting $k(x,x')=\langle\Phi(x),\Phi(x')\rangle_{H}$. In this paper, we focus on a class of structured feature spaces, that can be written 
as direct sums of a finite number of  spaces, involving thus different feature maps. 
This choice is known as multiple kernel learning \cite{BacLanJor04,MicPon05,MicPon07}.

\smallskip

\noindent \textbf{Sparse Multiple Kernel Learning}.
We consider $H_1,\ldots,H_G$  a finite number of separable Hilbert spaces and, for every $i\in[\![ 1,...,G ]\!] $, we let $\Phi_i\colon \mathbb{R}^d\to H_i$ to be a map. 
Then, we consider the space of functions defined by the concatenated feature map 
\[
\Phi\colon \mathbb{R}^d\to H_1\times\ldots\times H_G, \quad \Phi=(\Phi_1,\ldots,\Phi_G).
\]
The supervised learning problem in $\HH$ is called multiple kernel learning. 
In the context of multiple kernel learning,  $f_*$ is assumed to have a sparse representation in the feature space $H := H_1 \times \ldots \times H_G$, namely 
\[
(\forall x\in \mathbb{R}^d)\quad f_*(x)=\sum_{g=1}^G \langle  (w_*)_g,\Phi_g(x)  \rangle_{H_g},
\]
 with  $(w_*)_g=0$ in $H_g$ for several $g$'s. To approximate the function $f_*$, the idea is to devise a regularization term inducing sparsity in the sum 
 representations \cite{BacLanJor04,MicPon05,MicPon07}. This leads to the minimization problem with respect to $w =(w_1,\ldots,w_G) \in H$

\begin{equation}\label{P:ERM}
\underset{w \in H}{\argmin } \  \lambda \sum\limits_{g=1}^{G} \Vert w_g \Vert_{H_g} + \frac{1}{2m} \sum\limits_{i=1}^{m} (\sum_{g=1}^G \langle w_g , \Phi_g(x_i) \rangle_{H_g} - y_i )^2. 
\tag{MKL}
\end{equation}
The MKL  approach has several possible motivations. First, it allows the use of richer hypotheses spaces, and therefore allows in principle for better solutions to be found. Indeed, we can think of it as a way to combine different feature maps/kernels. Moreover, multiple kernel learning 
can also be used  as an alternative to model selection. By creating a concatenation of feature maps, model selection is replaced by an aggregation procedure.  Finally, multiple kernel 
learning is useful for data fusion. For example, different features/kernels could capture different characteristics of the data, and a better description is obtained by combining them. In the regime where we consider a large number of feature maps, the identification of the active features is crucial for good results. The special  case of MKL, where each feature map is finite dimensional is equivalent to so called {\em group lasso}.

\begin{example}[Group lasso] 
Partition $\mathbb{R}^d$ into non overlapping groups of variables in $\mathbb{R}^{d_i}$ for  $i\in[\![1,\ldots,G]\!]$. Set $H_i=\mathbb{R}^{d_i}$ and let $\Phi_i\colon\mathbb{R}^d\to\mathbb{R}^{d_i}$ be the canonical projection.  Then $H=\mathbb{R}^{d_1}\times\ldots\times \mathbb{R}^{d_G}$ and problem~\eqref{P:ERM} becomes
\begin{equation}
\label{e:GL}
\tag{GL}
\underset{w\in\mathbb{R}^{d}}{\argmin } \  \lambda \sum\limits_{g=1}^{G} \Vert w_g \Vert_{\mathbb{R}^{d_g}} + \frac{1}{2m} \sum\limits_{i=1}^{m} (\langle w ,x_i \rangle - y_i )^2.
\end{equation}
This is a well studied problem, and is called group lasso \cite{YuaLin06}. It is of interest when the vector $w_*$ has a structured sparsity pattern. 
More precisely, the considered penalty induces block-wise sparse  vectors, where all the
components belonging to a  given group are zero.
\end{example}

\smallskip

\noindent \textbf{The Iterative Kernel Thresholding Algorithm}. 
From a computational point of view,  as observed in \cite{MosRosSan10}, problem \eqref{P:ERM} can be solved using proximal splitting methods \cite{BauComV2}: the objective function structure is amenable for the forward
backward splitting algorithm,  since it is the sum of a differentiable term (the square loss) with Lipschitz continuous gradient, and  a convex continuous regularizer.   
Though this algorithm is defined on a possibly infinite-dimensional space 
$H$, a generalization of the representer theorem allows to compute its iterates as a finite linear combination of the features \cite{MosRosSan10}, as discussed next.

For every $g\in[\![1,\ldots,G]\!]$, let
\begin{align*}
X_g\colon& H_g\to\mathbb{R}^m, \quad
w_g\mapsto (\langle \Phi_g(x_i),w_g \rangle)_{i=1}^m,
\end{align*}
and 
\begin{equation*}
K_g := X_g X_g^* = (\langle \Phi(x_i), \Phi(x_{i'})\rangle )_{ii'}, \quad { K}=\left[K_1,\dots,K_G\right].
\end{equation*}
Then, the Iterative Kernel Thresholding Algorithm can be written as
\begin{equation}
\label{E:algo}
 \tag{$\text{IKTA}$}
\begin{array}{|l}
\alpha^0\in\mathbb{R}^{m\times G}, \ \tau \in ]0,2 \Vert { K} \Vert^{-1}[, \\
\alpha_g^{n+1} = \widehat \G_{\lambda \tau,g} \left( \alpha_g^n - \tau (K \alpha^n -{y}) \right) ,\\
w^{n+1}_g = X_g^* \alpha_g^{n+1}
,
\end{array}
\end{equation}
where $\widehat \G_{\lambda,g}$ denotes a group-thresholding operator defined by:
\begin{equation*}
\widehat \G_{\lambda,g} (\alpha_g) := 
\begin{cases}
 0 & \text{ if } \Vert X_g^* \alpha_g \Vert_{H_g} \leq \lambda , \\
 \left(1 - \frac{\lambda}{\Vert X_g^* \alpha_g \Vert_{H_g}} \right) \alpha_g & \text{ otherwise.}
\end{cases}
\end{equation*}
In the rest of the paper we show that the proposed algorithm produces sparse iterates, 
and more notably is able to identify the relevant features of the solutions of the regularized empirical risk functional if
a suitable qualification condition holds.  
This is a crucial and desirable property in sparse regularization, since the exact solutions of the 
regularized problems will be never computed in closed 
form and the sparsity pattern gives key information on the sought model.  In case the qualification condition does not hold, we 
show that the support of the iterates, though in general bigger than the ideal one, can still be controlled.

\section{Main results}

As a preliminary result, we state the asymptotic convergence of IGTA towards a solution of \eqref{P:ERM}.

\begin{proposition}\label{P:CV of the algorithm}
Let $(w^n)_{\nin}$ be a sequence generated by \ref{E:algo}.
Then there exists $\bar w \in H$, a solution of \eqref{P:ERM}, such that $\Vert w^n - \bar w \Vert \rightarrow 0$ as $n \to + \infty$.
\end{proposition}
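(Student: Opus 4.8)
The plan is to recognise that the sequence $(w^n)_\nin$ produced by \ref{E:algo} is exactly the orbit of a forward--backward (proximal gradient) scheme applied to \eqref{P:ERM}, that this orbit evolves inside a fixed finite-dimensional subspace of $H$, and then to combine the classical convergence theorem for forward--backward splitting with this finite-dimensional reduction. Concretely, I would introduce $X\colon H\to\R^m$, $Xw=\sum_g X_g w_g$, whose adjoint is $X^*z=(X_g^*z)_g$, and write the objective of \eqref{P:ERM} as $\Psi=R+F$ with $F(w)=\tfrac1{2m}\|Xw-y\|^2$ convex with Lipschitz gradient $\nabla F(w)=\tfrac1m X^*(Xw-y)$, and $R(w)=\lambda\sum_g\|w_g\|_{H_g}$ proper, convex and lower semicontinuous, whose proximity operator acts blockwise as the group soft-thresholding $v_g\mapsto(1-\mu/\|v_g\|_{H_g})_+\,v_g$. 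The first thing to check is the elementary identity $X_g^*\widehat\G_{\mu,g}(\alpha_g)=\prox_{\mu\|\cdot\|_{H_g}}(X_g^*\alpha_g)$; applying $X_g^*$ to the $\alpha$-update of \ref{E:algo}, and using $w^n_g=X_g^*\alpha^n_g$ so that $Xw^n=\sum_g X_g X_g^*\alpha^n_g=K\alpha^n$, one obtains
\[
w^{n+1}_g=\prox_{\lambda\tau\|\cdot\|_{H_g}}\!\big(w^n_g-\tau\,X_g^*(Xw^n-y)\big),
\]
i.e.\ (up to a normalisation of $F$) $(w^n)_\nin$ is precisely the forward--backward sequence for $\min_{w\in H}\Psi(w)$; this is the representer-type observation already used in \cite{MosRosSan10}.

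Next I would exploit the structure of this sequence. Every $w^n$ lies in the closed subspace $V:=\ran X_1^*\times\cdots\times\ran X_G^*$ of $H$, which is finite-dimensional because each $\ran X_g^*$ has dimension at most $m$; since $\ran X^*\subseteq V$, the gradient step maps $V$ into itself, and so does the blockwise thresholding, so the whole recursion takes place in $V$. On the other hand $\Psi$ is coercive on $H$ (because $\sum_g\|w_g\|_{H_g}\ge\|w\|$), convex and lower semicontinuous, hence it attains its minimum; and since $X$ vanishes on $V^\perp=\Ker X_1\times\cdots\times\Ker X_G$ while the orthogonal projection $P_V$ onto $V$ cannot increase $R$, one gets $\Psi(P_V w)\le\Psi(w)$ for every $w\in H$, whence $\min_V\Psi=\min_H\Psi$ and every minimizer of $\Psi$ over $V$ is a minimizer of $\Psi$ over $H$. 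Moreover the restricted recursion is exactly the forward--backward iteration for $\min_{w\in V}\Psi(w)$. Now I apply the standard forward--backward convergence theorem \cite{BauComV2} to this finite-dimensional problem: with $\tau$ in the range prescribed in \ref{E:algo} the iterates converge --- weakly in general, but strongly here since $V$ is finite-dimensional --- to some $\bar w$ minimizing $\Psi$ over $V$, hence over $H$, which is a solution of \eqref{P:ERM}. This gives $\|w^n-\bar w\|\to0$.

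The part that needs genuine care is the finite-dimensional reduction just described --- establishing $w^n_g=X_g^*\alpha^n_g$ and the resulting forward--backward form of the iteration, the invariance of $V$ under both the gradient and the proximal step, and the transfer of minimizers from $V$ back to $H$ via $P_V$. This is precisely what upgrades the convergence from weak to strong and what certifies that the limit minimizes $\Psi$ over the whole space and not merely over $V$; once it is in place, the convergence statement is a direct application of the known theory for forward--backward splitting, so no further delicate estimate is required.
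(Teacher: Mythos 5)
Your proposal is correct and follows essentially the same route as the paper: rewrite the $w$-iterates as a forward--backward scheme for the sum of the group norm and the quadratic data fit, invoke coercivity to get existence of minimizers, apply the standard forward--backward convergence theorem, and use the fact that the iterates live in the finite-dimensional subspace $\ran X_1^*\times\cdots\times\ran X_G^*$ (the paper's $\im D$) to obtain strong convergence. The only cosmetic difference is ordering: the paper first gets weak convergence in $H$ via \cite[Corollary 28.9]{BauComV2} and then upgrades it to strong convergence using finite-dimensionality, whereas you restrict the problem to that subspace before applying the theorem; both arguments are sound and rest on the same observations.
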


In our first main result, we look at the \textit{exact support recovery} property of  \ref{E:algo}.
We recall that the (group) support of a vector $w \in H = H_1 \times \ldots H_G$ is given by
\begin{equation*}
\supp(w) := \{ g \in [\![ 1 \ldots G ]\!] : w_g \neq 0 \}.
\end{equation*}
We show that, after a finite number of iterations, the algorithm identifies exactly the support of $\bar w$, provided the following qualification condition is verified:
\begin{equation}\label{QC}
\max\limits_{g \notin \supp (\bar w)} \  \frac{1}{\lambda} \Vert X_g^*(X \bar w - y ) \Vert_{H_g} < 1, \tag{QC}
\end{equation}
where $X : H \rightarrow \R^m$ is given by $Xw = \sum_{g=1}^{G} X_g w_g$.
This condition is also known as the ``irrepresentabilty condition''  \cite{zhao2006model,bach2008group}, or the ``strict sparsity pattern''  \cite{BreLor08}.

\begin{theorem}[Exact support recovery]\label{T:exact recovery with QC}
Let $(w^n)_{\nin}$ be a sequence generated by \ref{E:algo}, converging to $\bar w$.
If \eqref{QC} holds, then there exists $N \in \N$ such that
\begin{equation*}
(\forall n \geq N) \quad \supp(w^n) = \supp(\bar w).
\end{equation*}
\end{theorem}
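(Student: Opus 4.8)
The plan is to recognise \eqref{E:algo} as a plain forward--backward (proximal gradient) iteration on the \emph{primal} variable $w\in H$ for the objective in \eqref{P:ERM}, and then to read off support identification directly from the block-thresholding structure of the proximal step. First I would check that, since $w_g^n = X_g^*\alpha_g^n$ and $K\alpha^n = \sum_g X_gX_g^*\alpha_g^n = Xw^n$, the forward step $\alpha_g^n-\tau(K\alpha^n-y)$ becomes, after applying $X_g^*$, the gradient step $w_g^n - \tau X_g^*(Xw^n-y)$ on the quadratic loss; and that the kernel-thresholding satisfies $X_g^*\,\widehat\G_{\mu,g}(\alpha_g) = \prox_{\mu\Vert\cdot\Vert_{H_g}}(X_g^*\alpha_g)$ for every $\mu>0$. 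This last identity is immediate, because $\widehat\G_{\mu,g}$ thresholds exactly according to $\Vert X_g^*\alpha_g\Vert_{H_g}$, which is the norm of the argument of the prox, while $\prox_{\mu\Vert\cdot\Vert}(u)=(1-\mu/\Vert u\Vert)_+\,u$. Consequently the iterates obey $w_g^{n+1} = \prox_{\lambda\tau\Vert\cdot\Vert_{H_g}}\!\bigl(w_g^n - \tau X_g^*(Xw^n-y)\bigr)$ for each $g$, and by Proposition~\ref{P:CV of the algorithm} the limit $\bar w$ is a fixed point of this map.

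With this in hand the proof splits into the two inclusions. For $\supp(\bar w)\subseteq\supp(w^n)$ eventually: if $\bar w_g\neq 0$, then $w^n\to\bar w$ forces $\Vert w_g^n\Vert_{H_g}\to\Vert\bar w_g\Vert_{H_g}>0$, so $w_g^n\neq 0$ for all large $n$; this uses only continuity of the norm and nothing from \eqref{QC}. For the reverse inclusion I would use the elementary fact that $\prox_{\mu\Vert\cdot\Vert}(u)=0$ if and only if $\Vert u\Vert\le\mu$: for $g\notin\supp(\bar w)$ the recursion above gives $w_g^{n+1}=0$ as soon as $\Vert \tau^{-1}w_g^n - X_g^*(Xw^n-y)\Vert_{H_g}\le\lambda$. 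Since $w_g^n\to\bar w_g=0$ and $X_g^*(Xw^n-y)\to X_g^*(X\bar w-y)$ by continuity of the (finite-rank) operator $X$, the left-hand side converges to $\Vert X_g^*(X\bar w-y)\Vert_{H_g}$, which is $<\lambda$ precisely by \eqref{QC}; hence the inequality holds for all $n$ past some index, so $w_g^n=0$ eventually. Taking the maximum of the finitely many thresholds over $g$ produces the common $N$ of the statement.

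The main obstacle is conceptual rather than computational: one must recognise that the ad hoc operator $\widehat\G$ is exactly the proximal map of the group norm transported through $X_g^*$, so that \eqref{E:algo} really is a forward--backward scheme and the convergence/optimality of $\bar w$ can be exploited; after that the argument is routine. Two points deserve care. First, the \emph{strictness} of \eqref{QC} is essential: optimality of $\bar w$ only yields $\Vert X_g^*(X\bar w - y)\Vert_{H_g}\le\lambda$ for $g\notin\supp(\bar w)$, and a non-strict inequality could not be propagated to the tail of the sequence. Second, this hands-on computation is a particular instance of the general finite-identification principle for forward--backward under a non-degenerate dual certificate: the group norm $\sum_g\Vert\cdot\Vert_{H_g}$ is (mirror) stratifiable with strata indexed by the possible supports, and \eqref{QC} is exactly the relative-interior condition placing $-X^*(X\bar w - y)$ in the relative interior of the subdifferential of the regularizer at $\bar w$ --- the viewpoint developed in the rest of the paper, and the one needed to control $\supp(w^n)$ when \eqref{QC} fails.
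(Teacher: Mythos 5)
Your proof is correct, but it takes a genuinely different route from the paper's. The paper obtains Theorem \ref{T:exact recovery with QC} as an immediate corollary of Theorem \ref{T:identification with no QC} (the extended-support control), simply observing that \eqref{QC} is equivalent to $\supp(\bar w)=\esupp(\bar w)$; the real work is thus delegated to the mirror-stratifiability machinery of Section \ref{SS:proofs:mirror strat}. You instead give a direct, self-contained argument: after rewriting \ref{E:algo} as the primal forward--backward recursion $w_g^{n+1}=\prox_{\lambda\tau\Vert\cdot\Vert_{H_g}}\bigl(w_g^n-\tau X_g^*(Xw^n-y)\bigr)$ (which is exactly the reduction carried out in the paper's proof of Proposition \ref{P:CV of the algorithm}, via $X_g^*\widehat\G_{\lambda\tau,g}=\prox_{\lambda\tau\Vert\cdot\Vert}\circ X_g^*$), you get the inclusion $\supp(\bar w)\subset\supp(w^n)$ from norm continuity alone, and the reverse inclusion from the characterization $\prox_{\mu\Vert\cdot\Vert}(u)=0\Leftrightarrow\Vert u\Vert\le\mu$ together with the strict inequality in \eqref{QC}, which survives the passage to the limit $\tau^{-1}w_g^n-X_g^*(Xw^n-y)\to -X_g^*(X\bar w-y)$. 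All the individual steps check out (including the threshold computation and the use of finiteness of $G$ to get a uniform $N$), and your closing remark correctly identifies why strictness of \eqref{QC} is indispensable. What the two approaches buy: yours is elementary, avoids any stratification language, and makes the identification mechanism completely explicit for this specific regularizer; the paper's is heavier but uniform --- the same Proposition \ref{T:mirror strat id} yields both the qualified and the unqualified case, and extends verbatim to any mirror-stratifiable penalty, whereas your thresholding argument is tied to the closed form of $\prox_{\mu\Vert\cdot\Vert}$ and gives no conclusion when \eqref{QC} fails.
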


The condition \eqref{QC} is necessary for Theorem \ref{T:exact recovery with QC}, in the sense that the support of $\bar w$ \textit{cannot be identified in general} after a finite number of iterations.
Without \eqref{QC}, it could be that the algorithm generates sequences identifying a strictly larger support. 
This is illustrated in the following example:
\begin{example}
Let $G=1$, $H_1=\R$, $X_1=1$, $y_1=1$, $\lambda =1$.
Then the corresponding problem \eqref{P:ERM} has a unique solution, $\bar w=0$, for which \eqref{QC} is clearly not satisfied.
Take now  $\alpha^0=w^0 \in ]0,+\infty[$, $\tau \in ]0,1[$, and consider the sequence $(w^n)_\nin$ generated by \ref{E:algo}.
It is not difficult to show that this sequence verifies $w^{n+1} = (1-\tau) w^n$, and therefore will never satisfy $\supp(w^n) = \supp(\bar w)$.
\end{example}

In our second main result, we tackle the case in which \eqref{QC} is not verified, and show that the support of the sequence can nevertheless  be controlled.
For this, one needs to look at the \textit{extended support} of the solution, which corresponds to the set of  active constraints of the dual problem of \eqref{P:ERM}:
\begin{equation*}
\esupp(\bar w) := \{ g \in [\![ 1,...,G ]\!] : \Vert X_g^*(X \bar w - y ) \Vert_{H_g} = \lambda \}.
\end{equation*}

\begin{theorem}[Extended support control]\label{T:identification with no QC}
Let $(w^n)_{\nin}$ be a sequence generated by \ref{E:algo}, converging to $\bar w$.
Then there exists $N \in \N$ such that
\begin{equation*}
(\forall n \geq N) \quad \supp(\bar w) \subset  \supp(w^n) \subset \esupp(\bar w).
\end{equation*}
\end{theorem}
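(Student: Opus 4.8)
The plan is to recast the iteration \ref{E:algo} as a forward-backward fixed-point iteration on the dual variable $\alpha$ and to apply the machinery of mirror stratifiability, which relates the support of a primal point to the position of the associated dual point relative to the faces (strata) of the dual feasible set. First I would write down the structure: the group penalty $R(w)=\sum_g \|w_g\|_{H_g}$ has subdifferential whose dual "sphere condition" is $\|X_g^*(X\bar w - y)\|_{H_g} \le \lambda$ for all $g$, with equality exactly on $\esupp(\bar w)$. Writing the fixed-point relation for the limit $\bar w$, namely $\bar w_g = X_g^*\bar\alpha_g$ with $\bar\alpha = \widehat{\G}_{\lambda\tau}(\bar\alpha - \tau(K\bar\alpha - y))$, and noting $X\bar w - y = K\bar\alpha - y =: -\bar r$, the two support notions read off as $\supp(\bar w) = \{g : \|X_g^*\bar r\|_{H_g} < \lambda\}^{\comp}$ restricted further... more precisely $\supp(\bar w)\subset \esupp(\bar w)$ always, by the optimality conditions for \eqref{P:ERM}.

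The core of the argument is a two-sided identification statement for the forward-backward map. For the inclusion $\supp(w^n)\subset\esupp(\bar w)$: I would show that the quantity $z_g^n := X_g^*\alpha_g^n - \tau X_g^*(K\alpha^n - y)$ converges to $X_g^*\bar\alpha_g - \tau X_g^*\bar r$; by continuity of $X_g^*$ and $K$ and Proposition~\ref{P:CV of the algorithm} (convergence of $w^n$, hence of $\alpha^n$ along the relevant components) this is routine. If $g\notin\esupp(\bar w)$ then $\|X_g^*\bar r\|_{H_g} < \lambda$, and from the fixed-point relation $\bar\alpha_g = \widehat{\G}_{\lambda\tau,g}(\bar\alpha_g - \tau(K\bar\alpha-y))$ one deduces $\bar\alpha_g = 0$ and the argument of the thresholding lies strictly inside the ball of radius $\lambda\tau$; hence for $n$ large the argument $z_g^n$ also lies strictly inside that ball, the threshold kills it, $\alpha_g^{n+1}=0$, and so $w_g^{n+1}=X_g^*\alpha_g^{n+1}=0$, i.e. $g\notin\supp(w^{n+1})$. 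For the inclusion $\supp(\bar w)\subset\supp(w^n)$: if $g\in\supp(\bar w)$ then $\bar w_g\neq 0$, so $X_g^*\bar\alpha_g\neq 0$, so $\bar\alpha_g\neq 0$; since the non-zero branch of $\widehat{\G}_{\lambda\tau,g}$ is continuous and $\widehat{\G}_{\lambda\tau,g}(z_g^n)\to\widehat{\G}_{\lambda\tau,g}(\bar z_g)=\bar\alpha_g\neq 0$, we get $\alpha_g^{n+1}\neq 0$ for $n$ large; I then need $w_g^{n+1}=X_g^*\alpha_g^{n+1}\neq 0$, which follows because $X_g^*\alpha_g^{n+1}\to X_g^*\bar\alpha_g = \bar w_g \neq 0$. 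Taking $N$ to be the maximum of the finitely many thresholds (one per $g\in[\![1,G]\!]$) gives the claim for all $n\ge N$.

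The main obstacle is handling the boundary case $g\in\esupp(\bar w)\setminus\supp(\bar w)$: there $\|X_g^*\bar r\|_{H_g}=\lambda$ exactly, so $\bar\alpha_g=0$ but the thresholding argument $\bar z_g$ lies \emph{on} the sphere of radius $\lambda\tau$, a non-strict inequality. Here the iterates $z_g^n$ may approach from inside or outside, so one cannot conclude $\alpha_g^{n+1}=0$; this is precisely why the theorem only asserts $\supp(w^n)\subset\esupp(\bar w)$ rather than equality, and why the semantics of $\widehat{\G}$ at the boundary (it returns $0$ when $\le\lambda$) must be invoked carefully — one checks that even if $z_g^n$ lands outside, the support inclusion is not violated because $g$ is already permitted in $\esupp(\bar w)$. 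I would also need to verify the passage from convergence of $(w^n)$ to convergence of $(\alpha^n)$: the representation $w_g^n = X_g^*\alpha_g^n$ does not by itself control $\alpha_g^n$, but the recursion $\alpha_g^{n+1} = \widehat{\G}_{\lambda\tau,g}(\alpha_g^n - \tau(K\alpha^n - y))$ together with the fact that $K\alpha^n = X w^n \to X\bar w$ shows $\alpha^n$ is the image of a convergent sequence under a (componentwise) continuous map up to the $0/$non-$0$ dichotomy, so the relevant components converge; alternatively one argues directly at the level of $z^n_g \in H_g$ and only passes to $w_g^n$ at the end via $X_g^*$, sidestepping the issue entirely. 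This last route — work with $z^n = X^*\alpha^n - \tau X^*(K\alpha^n-y)$ and its limit throughout, invoking continuity of $X^*$ and $K$ and Proposition~\ref{P:CV of the algorithm} only once — is the cleanest and is the one I would write up.
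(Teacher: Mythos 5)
Your proof is correct, but it takes a genuinely different route from the paper. Both arguments start from the same reduction — pushing the iteration to the primal via $w^{n+1}_g=\G_{\lambda\tau,g}\bigl(z^n_g\bigr)$ with $z^n_g:=w^n_g-\tau X_g^*(Xw^n-y)$, and using the strong convergence of Proposition \ref{P:CV of the algorithm} to get $z^n_g\to\bar z_g$ — but from there the paper invokes the abstract machinery it develops in Section \ref{SS:proofs:mirror strat}: it exhibits explicit stratifications $\M,\M^*$, checks that $R=\sum_g\Vert\cdot\Vert_{H_g}$ is mirror-stratifiable, builds a convergent sequence $\eta^n\in\partial R(w^{n+1})$ on the graph of $\partial R$, and applies the general strata-stability result (Proposition \ref{T:mirror strat id}) before translating $M_{\bar w}\preceq M_{w^n}\preceq J_{R^*}(M^*_{\bar\eta})$ back into support inclusions. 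You instead analyze the group-thresholding operator directly at the limit: for $g\notin\esupp(\bar w)$ the strict inequality $\Vert X_g^*(X\bar w-y)\Vert<\lambda$ forces $\Vert\bar z_g\Vert<\lambda\tau$, so nearby iterates are annihilated; for $g\in\supp(\bar w)$ the fixed-point relation forces $\Vert\bar z_g\Vert>\lambda\tau$, so nearby iterates stay nonzero; and you correctly identify that nothing can (or need) be said on the boundary set $\esupp(\bar w)\setminus\supp(\bar w)$, which is exactly where the two strict dichotomies fail. Your choice to work with $z^n_g\in H_g$ rather than with $\alpha^n$ is also the right one, since convergence of $\alpha^n$ is not guaranteed by Proposition \ref{P:CV of the algorithm}. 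What the paper's detour buys is generality: its Proposition \ref{T:mirror strat id} applies verbatim to any mirror-stratifiable regularizer, which is the extensibility the authors advertise. What your argument buys is economy and self-containedness: it needs no stratification vocabulary, no quasi-relative interiors, and no appeal to \cite{FadMalPey17}, at the price of being tied to the specific closed form of $\G_{\lambda\tau,g}$.
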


\noindent \textbf{Relation with previous works:} 
When the the groups have dimension 1 (i.e. $H_g \equiv \R$), the problem boils down to a simple $\ell^1$ regression problem.
In that case, exact support recovery in presence of \eqref{QC} is obtained in \cite{BreLor08}, and extended support control in \cite{HalYinZha07}.
When the groups have finite dimension (i.e. $\dim H_g < + \infty$), exact support recovery under \eqref{QC} is known, see for instance \cite{LiaFadPey14}, which contains results for the large class of \textit{partially smooth} functions. 
Still in finite dimension, extended support control has been recently obtained in \cite{FadMalPey17}, where the authors introduce the class of mirror-stratifiable functions.

To our knowledge, Theorems \ref{T:exact recovery with QC} and \ref{T:identification with no QC} are the first to treat the support recovery/control of an algorithm for solving a multiple kernel learning problem involving possibly infinite-dimensional kernels.
Our theoretical results rely on an extension of \cite{FadMalPey17} to the separable Hilbert space setting. 
In particular, we underline that these results build on the concept of stratification, and can thus be extended to the large class of mirror stratifiable regularizers (see section \ref{SS:proofs:mirror strat}).

\section{Numerical experiments}

In this section we illustrate our main results, by considering different variants of \eqref{P:ERM}.
For each experiment, we draw data points $(x_i)_{i=1}^m \subset \R^p$ at random  with i.i.d. entries from a zero-mean standard Gaussian distribution. The Hilbert space of functions we consider is either the 
space of linear functionals or the space generated by combinations of Gaussian  kernels.
Then, we take $\alpha_* \in \R^{m \times G}$ randomly among the vectors verifying  $\vert \supp(\alpha_*) \vert = s$, and take $y = K\alpha_* + \varepsilon$, where $\varepsilon$ is a zero-mean white Gaussian noise with standard deviation $10^{-2}$. 
Here $\supp(\alpha_*)$ must be understood as the group-support of $\alpha_* \in\mathbb{R}^{m\times G} = \R^m \times \ldots \times \R^m$.

We consider two problems, for which we take the following parameters $(m,G,s,\lambda)=(50,20,5,0.2)$.
The first is the \textit{group-lasso} setting, where we chose $p=100$, $H_g \equiv \R^{5}$ and $\Phi_g : \R^{100} \rightarrow H_g$ to be the canonical projection on the $g$-th group of coordinates.
The second is the \textit{Gaussian-Kernel} setting, where we choose $p=2$, and each space $H_g$ to be an infinite-dimensional RKHS associated to the Gaussian kernel
\begin{equation*}
k(x,x') = e^{-\frac{\Vert x_i - x_{i'} \Vert^2}{2\sigma_g^2}} \ ,
\end{equation*}
with $(\sigma_g)_{g=1}^G$ being taken at random in $[10^{-1}, 10^1]$.
For each problem, we draw $200$ instances of $((x_i,y_i)_{i=1}^m,\alpha_*)$, and solve the corresponding \eqref{P:ERM} problem with \ref{E:algo}. The step size for the algorithm is $\tau = 0.8 \Vert K \Vert^{-1}$, and we stop the algorithm after $N=5000$ (resp. $N=50000$) iterations for the group-lasso (resp. Gaussian-kernel).

\smallskip

\noindent \textbf{Description of the results}.
 In Figure \ref{F:histograms}, we show the distribution of $\vert \supp(w^N) \vert$ over the $200$ instances of the algorithm.
We see that for the group-lasso, the method recovers most of the time a support of size $5$-$7$, and that the probability of finding another support decays quickly.
For the Gaussian-kernel,  there are a significant number of instances where  $\vert \supp(w^N) \vert < 5$ and the probability of finding a smaller support decays slowly, 
while the probability of finding a large support decays as quickly as in the linear case.
In Figure \ref{F:algos}, we illustrate the evolution of the support along the iterations of \ref{E:algo}.

\smallskip

\noindent \textbf{Discussion on the results}.
An important point to observe is the fact that the algorithm often fails at recovering exactly the desired support.
This means that qualification conditions which are often assumed in the literature to guarantee the stability of the support cannot be taken for granted in practice.
In the light of Theorem \ref{T:identification with no QC}, the results of Figure \ref{F:histograms} seem to suggest that \ref{E:algo} often fails at recovering the support of $\bar w$.

But it is important to keep in mind that another phenomenon enters into play here: $\bar w$ and $w_*$ does not always share the same support, even for small $\lambda$.
This is particularly clear when we look at the iterates whose support is \textit{smaller} than the one of $w_*$: it would contradict Theorem \ref{T:identification with no QC} if $\supp(w_*) = \supp(\bar w)$.
This fact is well-known in the finite dimensional setting, when no qualification condition holds at $w_*$, see e.g. \cite{FadMalPey17}.
To have a finer understanding of our results, it would be interesting to have a result like \cite[Theorem 3]{FadMalPey17} adapted to our infinite dimensional setting.  

\begin{figure}[h]\centering
\begin{tabular}{@{}c@{\hspace{1mm}}c@{}}
\includegraphics[width=.49\linewidth]{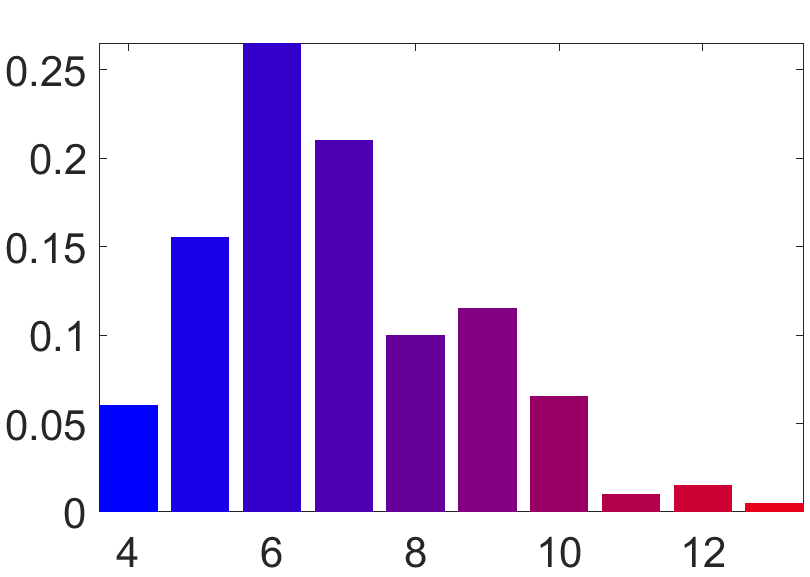} &
\includegraphics[width=.49\linewidth]{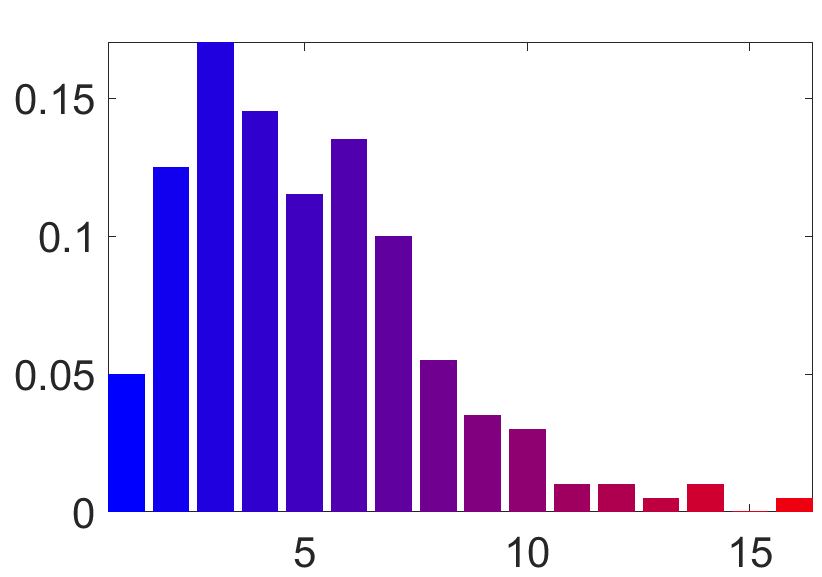}
\end{tabular}
\caption{\label{F:histograms}
Histogram of the size of the support of $w^N$ among 200 realizations of \ref{E:algo}. 
Left: group-lasso. Right: Gaussian-kernel. }
\end{figure}

\begin{figure}[h]\centering
\begin{tabular}{@{}c@{\hspace{1mm}}c@{}}
\includegraphics[width=.49\linewidth]{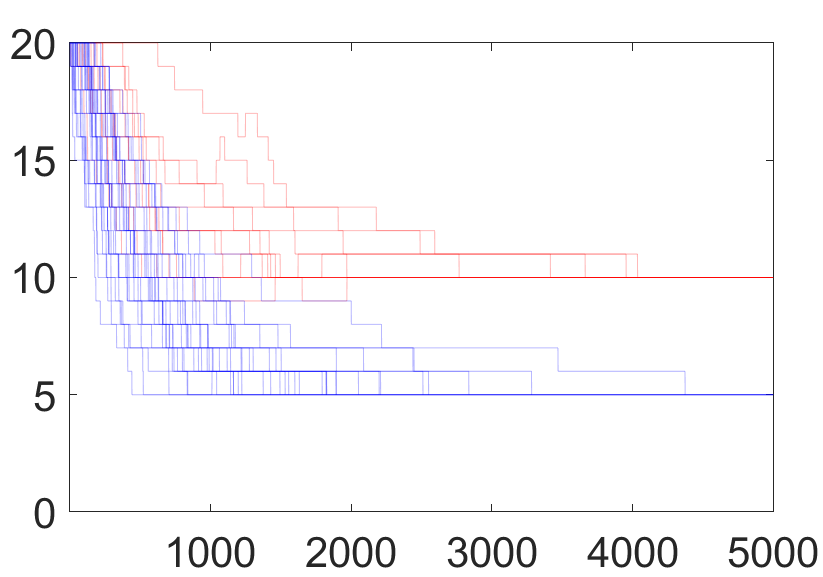} &
\includegraphics[width=.49\linewidth]{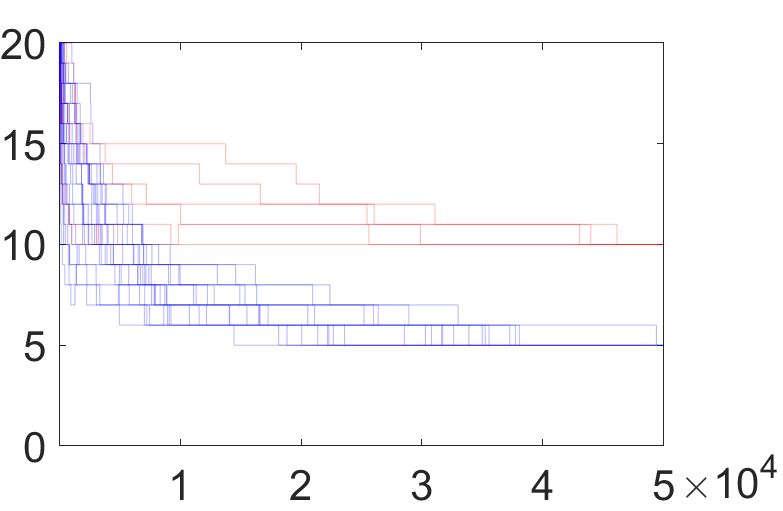}
\end{tabular}
\caption{\label{F:algos}
Evolution of $\supp(w^n)$ while running \ref{E:algo}, for the realizations reaching a support of size $5$ (blue) and $10$ (red). 
Left: group-lasso. Right: Gaussian-kernel. }
\end{figure}

\section{Proofs}

For this section we  need to introduce some notation.
Given $\Omega \subset H$, $\cl \Omega$, $\inte \Omega$ and $\qri \Omega$ will denote respectively the (norm-)closure, interior and quasirelative interior \cite[Definition 6.9]{BauComV2} of $\Omega$.
The closed unit ball, and the corresponding unit sphere of a Hilbert space $H$ are noted $\B_{H_g}$ and $\mathbb{S}_{H_g}$.
The set of all proper lower semi-continuous convex functions on $H$ are denoted $\Gamma_0(H)$.
For $R \in \Gamma_0(H)$, its Fenchel conjugate \cite[Definition 13.1]{BauComV2} is denoted $R^*$.
Finally, for a vector in the product of Hilbert spaces $w \in H = H_1 \times \ldots H_G$, we will often use $\Vert w_g \Vert$ instead of $\Vert w_g \Vert_{H_g}$, if there is no ambiguity.

\subsection{Proof of Proposition \ref{P:CV of the algorithm}}

Let $D : \R^{m \times G} \rightarrow H$ be the linear operator defined by $D \alpha = (X_g^* \alpha_g)_{g \in [\![1.. G ]\!]}$.
Also define $\textbf{y} = (y; \ldots ;y) \in R^{m \times G}$, $\textbf{K} = (K; \ldots ; K) \in \R^{(m\times G)^2}$, and $\widehat{\G}_{\lambda \tau}(\alpha) = (\widehat{\G}_{\lambda \tau,g}(\alpha_g))_{g=1}^G$.
Then,  \ref{E:algo} can be rewritten as
\begin{equation*}
w^{n+1} = D \widehat{\G}_{\lambda \tau} \left( \alpha^n - \tau (\textbf{K} \alpha^n - \textbf{y}) \right).
\end{equation*}
It can also be verified that 
\begin{equation*}
\text{${D \circ \widehat{\G}_{\lambda \tau} = \G_{\lambda \tau} \circ D}$, $D\textbf{K} = X^*XD$ and $D\textbf{y} = X^*y$}
\end{equation*}
holds,
where ${\G}_{\lambda \tau}$ is the group thresholding operator:
\begin{equation*}
(\forall g \in [\![  1..G ]\!]) \ 
 (\G_{\lambda} (\alpha))_g := 
\begin{cases}
 0 & \text{ if } \Vert  \alpha_g \Vert \leq \lambda , \\
 \left(1 - \frac{\lambda}{\Vert \alpha_g \Vert} \right) \alpha_g & \text{ otherwise.}
\end{cases}
\end{equation*}
We see then that the iterates of \ref{E:algo} verify:
\begin{align*}
w^{n+1} & = {\G}_{\lambda \tau} \circ D \left( \alpha^n - \tau (\textbf{K} \alpha^n - \textbf{y}) \right), \\
& = {\G}_{\lambda \tau} \left( w^n - \tau X^*(X w^n- y) \right).
\end{align*}
We recognize here the iterates of the Forward-Backward algorithm applied to $R+h$, where
\begin{equation}\label{E:proof def R and h}
R(w):= \sum_{g=1}^G \Vert w_g \Vert, \ h(w):= \Vert Xw - y \Vert^2/(2 \lambda).
\end{equation}
Since $R$ is coercive and $h$ is bounded from below, we know that \eqref{P:ERM} admits solutions, so we deduce from \cite[Corollary 28.9]{BauComV2} that the sequence converges weakly towards a solution of  \eqref{P:ERM}.
Since $(w^n)_\nin$ belongs to $\im D$, which has finite dimension, this weak convergence actually turns out to be strong.

\subsection{Identification for mirror-stratifiable functions}\label{SS:proofs:mirror strat}

To prove Theorems \ref{T:exact recovery with QC} and \ref{T:identification with no QC}, we adapt to  our Hilbert setting the tools of stratification (of a set) and mirror-stratifiablilty (of a function) introduced in \cite{FadMalPey17}.

We say that $\M = \{M_i\}_{i \in I} \subset 2^H$ is a \textit{stratification} of a set $\Omega \subset H$, if it is a finite partition of $\Omega$ verifying, for any pair of strata $M$ and $M'$ in $\M$:
\begin{equation}\label{E:strata def}
M \cap \cl M' \neq \emptyset \Rightarrow M \subset \cl M'.
\end{equation}
Since it is a partition, we note $M_w \in \M$ the unique strata containing a vector $w \in \Omega$.
It is important to note that the property \eqref{E:strata def} endows $\M$ with a partial order $\preceq$, given by:
\begin{equation*}
M \preceq M' \Leftrightarrow M \subset \cl M'.
\end{equation*}
We are now in position to define the mirror-stratifiability of a convex function $R \in \Gamma_0(H)$.
For this, we will need to associate to $R$ a correspondance operator between subsets of $\dom \partial R$ and $\dom \partial R^*$:
\begin{equation*}
(\forall S \subset \dom \partial R) \quad J_R(S) := \bigcup\limits_{w \in S} \qri \partial R(w).
\end{equation*}

\begin{definition}
Let $R \in \Gamma_0(H)$, and let $\M$ (resp. $\M^*$) be a stratification of $\dom \partial R$ (resp. $\dom \partial R^*$).
We say that $R$ is mirror-stratifiable (w.r.t. $\M$ and $\M^*$) if $J_R$ realizes a bijection between $\M$ and $\M^*$ (with inverse $J_{R^*}$), and  is decreasing:
\begin{equation*}
(\forall (M,M') \in \M^2) \quad  M \preceq M' \Leftrightarrow J_R(M') \preceq J_R(M).
\end{equation*}
\end{definition}

Our main result for mirror-stratifiable functions controls the stability of their stratas up to perturbations on the graph of $\partial R$:

\begin{proposition}\label{T:mirror strat id}
Let $R \in \Gamma_0(H)$ be mirror-stratifiable w.r.t. $(\M,\M^*)$, and  consider a converging sequence on the graph of $\partial R$:
\begin{equation*}
\eta^n \in \partial R(w^n), \quad w^n \underset{n \to + \infty}{\longlongrightarrow} \bar w, \quad \eta^n \underset{n \to + \infty}{\longlongrightarrow} \bar \eta.
\end{equation*}
Then there exists $N \in \N$ such that
\begin{equation*}
(\forall n \geq N) \quad M_{\bar w} \preceq M_{w^n} \preceq  J_{R^*}(M^*_{\bar \eta}).
\end{equation*}
\end{proposition}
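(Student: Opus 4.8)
The strategy is to prove the two inclusions $M_{\bar w} \preceq M_{w^n}$ and $M_{w^n} \preceq J_{R^*}(M^*_{\bar\eta})$ separately, each by a closure/limiting argument exploiting the defining property \eqref{E:strata def} of a stratification together with the mirror-stratifiability of $R$.

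\emph{Lower bound $M_{\bar w} \preceq M_{w^n}$.} First I would observe that, since $w^n \to \bar w$, for large $n$ the point $\bar w$ lies in the closure of any neighborhood of $w^n$; more to the point, because $\M$ is a finite partition of $\dom \partial R$ and the strata satisfy the frontier-type condition \eqref{E:strata def}, the stratum $M_{w^n}$ is ``locally stable'': there is a neighbourhood $V$ of $\bar w$ such that every $w \in V \cap \dom \partial R$ satisfies $M_{\bar w} \preceq M_w$. The cleanest way to see this is to pass to a subsequence so that $M_{w^n}$ is a constant stratum $M'$ (possible by finiteness of $\M$); then $\bar w \in \cl M'$, hence $M_{\bar w} \cap \cl M' \neq \emptyset$, so by \eqref{E:strata def} $M_{\bar w} \subset \cl M'$, i.e. $M_{\bar w} \preceq M' = M_{w^n}$. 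Since every subsequence has a further subsequence with this property and the conclusion does not depend on which subsequence, it holds for all large $n$.

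\emph{Upper bound $M_{w^n} \preceq J_{R^*}(M^*_{\bar\eta})$.} Here I would use the mirror correspondence. Applying the lower-bound argument to the dual sequence $\eta^n \in \dom \partial R^* $ with $\eta^n \to \bar\eta$ (note $\eta^n \in \partial R(w^n)$ implies $\eta^n \in \dom \partial R^*$ by Fenchel--Young), we get $M^*_{\bar\eta} \preceq M^*_{\eta^n}$ for large $n$. Now I need to connect $M^*_{\eta^n}$ back to $M_{w^n}$: the relation $\eta^n \in \partial R(w^n)$ combined with $\eta^n \in \qri \partial R(w^n)$ — which I would argue holds, or can be arranged, because the subgradient is ``generically'' in the quasi-relative interior of its stratum, or more carefully because the definition of $J_R$ as $\bigcup_{w} \qri \partial R(w)$ forces the stratum of a typical dual point to be $J_R(M_{w^n})$ — yields $M^*_{\eta^n} = J_R(M_{w^n})$. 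Then, using that $J_R$ is a decreasing bijection with inverse $J_{R^*}$, the chain $M^*_{\bar\eta} \preceq M^*_{\eta^n} = J_R(M_{w^n})$ gives, upon applying $J_{R^*}$ and reversing the order, $M_{w^n} \preceq J_{R^*}(M^*_{\bar\eta})$.

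\emph{Main obstacle.} I expect the delicate point to be the identity $M^*_{\eta^n} = J_R(M_{w^n})$, i.e.\ showing that the dual point $\eta^n$ actually lands in the stratum prescribed by the correspondence operator. The subgradient $\partial R(w^n)$ is an entire face/set, and only its quasi-relative interior is captured by $J_R$; a boundary selection of $\eta^n$ could a priori live in a smaller stratum $M^* \prec J_R(M_{w^n})$, which would only make the upper bound stronger, not weaker — so in fact what is needed is just $M^*_{\eta^n} \preceq J_R(M_{w^n})$, which should follow from $\qri \partial R(w^n) \subset M^*_{\eta^n}$-closure arguments, namely $\eta^n \in \cl(\qri \partial R(w^n)) = \cl \partial R(w^n)$ forces $M^*_{\eta^n} \cap \cl J_R(M_{w^n}) \neq \emptyset$, hence $M^*_{\eta^n} \preceq J_R(M_{w^n})$ by \eqref{E:strata def}. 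Handling the quasi-relative interior correctly in the infinite-dimensional Hilbert setting — in particular that $\qri \partial R(w)$ is nonempty and behaves like a relative interior for the convex sets arising here (sums of balls) — is the technical crux, and is presumably where the adaptation of \cite{FadMalPey17} to separable Hilbert spaces does its work.
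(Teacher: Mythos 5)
Your proposal is correct and follows essentially the same route as the paper: a local stratum-stability argument for both the primal and dual sequences (the paper cites \cite[Proposition 1]{FadMalPey17} where you give the equivalent finite-subsequence argument), followed by $\eta^n \in \partial R(w^n) = \cl\qri\partial R(w^n) \subset \cl J_R(M_{w^n})$ (the paper's invocation of \cite[Fact 6.14.ii]{BauComV2}, which is exactly the quasi-relative-interior point you flag as the crux), the frontier condition \eqref{E:strata def}, transitivity, and the decreasing bijection. Your self-correction in the last paragraph --- that only $M^*_{\eta^n} \preceq J_R(M_{w^n})$ is needed, not equality --- is precisely the step the paper uses.
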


\begin{proof}
Reasoning exactly as in \cite[Proposition 1]{FadMalPey17}, we obtain the existence of some $\delta>0$ such that 
\begin{equation*}
\max \{\Vert w - \bar w \Vert , \Vert \eta - \bar \eta \Vert \} \leq \delta \Rightarrow M_{\bar w} \preceq M_w \text{ and } M_{\bar \eta}^* \preceq M_\eta^*.
\end{equation*}
From now, we consider that $n$ is large enough so that $\max \{\Vert w^n - \bar w \Vert , \Vert \eta^n - \bar \eta \Vert \} \leq \delta$ holds.
Thus, the first inequality $M_{\bar w} \preceq M_{w^n}$ holds.
Using \cite[Fact 6.14.ii]{BauComV2}, we see that
\begin{equation*}
\eta^n \in \partial R(w^n) = \cl \qri \partial R(w^n) = \cl J_{R}(\{w^n\}) \subset \cl J_R(M_{w^n}),
\end{equation*}
or, equivalently,  $M^*_{\eta^n} \preceq J_R(M_{w^n})$.
Considering that $M_{\bar \eta}^* \preceq M_{\eta^n}^*$, we deduce from the transitivity of $\preceq$ that $M_{\bar \eta}^* \preceq J_R(M_{w^n})$.
Since we assume $J_R$ to be bijective and decreasing, the latter is equivalent to
$M_{w^n} \preceq J_{R^*}(M^*_{\bar \eta})$, which ends the proof.
\end{proof}

\subsection{Proof of Theorem \ref{T:identification with no QC}}

Let $R$ and $h$ be as in \eqref{E:proof def R and h}, and let us define 
\begin{align*}
\M := & \left\{ \prod_{g=1}^G M_g : (\forall g \in [\![ 1..G ]\!]) \ M_g \in \{ H_g \setminus \{0\}, \{0 \} \} \right\},\\
\M^* := & \left\{ \prod_{g=1}^G M_g^* : (\forall g \in [\![ 1..G ]\!]) \ M_g^* \in \{  \mathbb{S}_{H_g} , \inte \B_{H_g}\} \right\}.
\end{align*}
It is a simple exercise to verify that $\M$ is a stratification of $\dom \partial R = H$, and that $\M^*$ is a stratification of $\dom \partial R^* = \prod_g \B_{H_g}$.
Then, note that for all $w \in H$ and $\eta \in \prod_g \B_{H_g}$:
\begin{equation*}
\partial R(w) = \prod_g M^*_g, \text{ with } M^*_g =
\begin{cases}
 \frac{w_g}{\Vert w_g \Vert} & \text{if } w_g \neq 0 \\
 \inte \mathbb{B}_{H_g} & \text{else,}
\end{cases}
\end{equation*}
\begin{equation*}
\partial R^*(\eta) = \prod_g M_g, \text{ with } M_g =
\begin{cases}
 [0,+\infty[ \eta_g & \text{if } \eta_g \in \mathbb{S}_{H_g} \\
 \{0\} & \text{else.}
\end{cases}
\end{equation*}
Using \cite[Proposition 2.5]{BorLew92}, it is  easy to deduce that $J_R$ induces a bijection between and $\M$ and $\M^*$, by putting in relation $H_g \setminus \{0 \}$ with $\mathbb{S}_{H_g}$, and $\{0\}$ with $\inte \B_{H_g}$.
It can be shown by the same arguments that $J_R$ is decreasing for $\preceq$, meaning that $R$ is mirror-stratifiable w.r.t. $(\M,\M^*)$.

Now, according to the proof of Proposition \ref{P:CV of the algorithm}, the sequence $(w^n)_\nin$ verifies
\begin{equation*}
w^{n+1} = \G_{\lambda\tau}(w^n - \tau \lambda \nabla h(w^n)),
\end{equation*}
which can be equivalently rewritten as
\begin{equation*}
\eta^n:= \frac{w^n - w^{n+1}}{\lambda \tau} - \nabla h(w^n) \in \partial R(w^{n+1}).
\end{equation*}
We know from Proposition \ref{P:CV of the algorithm} that $w^{n+1}$ converges strongly to $\bar w$, so we can deduce from the continuity of $\nabla h$ that $\eta^n$ converges to $\bar \eta := -\nabla h(\bar w)$.
Applying Proposition \ref{T:mirror strat id}, we deduce that for $n$ large enough,
\begin{equation*}
M_{\bar w} \preceq M_{w^n} \preceq J_{R^*}(M^*_{\bar \eta}).
\end{equation*}
From the definition of $\M$, we see that $M_{\bar w} \preceq M_{w^n}$ is equivalent to $\supp(\bar w) \subset \supp(w^n)$.
Also, we have from the definition of $J_{R^*}$ and $\M^*$ that
\begin{equation*}
J_{R^*}(M^*_{\bar \eta}) = \{ w \in H : w_g \neq 0 \text{ if } \Vert \bar \eta_g \Vert =1, w_g =0 \text{ else} \}.
\end{equation*}
According to the definition of $\esupp(\bar w)$, we deduce that $ M_{w^n} \preceq J_{R^*}(M^*_{\bar \eta})$ is equivalent to $\supp(w^n) \subset \esupp(\bar w)$, which concludes the proof.

\subsection{Proof of Theorem \ref{T:exact recovery with QC}}

Theorem \ref{T:exact recovery with QC} follows directly from Theorem \ref{T:identification with no QC}, by  observing that \eqref{QC} is equivalent to $\supp(\bar w) = \esupp (\bar w)$.

\end{document}